\theoremstyle{plain}
\newtheorem{theorem}{Theorem}[section]
\newtheorem{prop}[theorem]{Proposition}
\newtheorem{lemma}[theorem]{Lemma}
\theoremstyle{definition}
\newtheorem{remark}[theorem]{Remark}
\newtheorem{defn}[theorem]{Definition}
\newtheorem{example}[theorem]{Example}
\newtheorem{prob}[theorem]{Problem}
\renewcommand{\tilde}{\widetilde}
\renewcommand{\setminus}{\smallsetminus}
\newcommand{\R}{\mathbb R}
\newcommand{\Z}{\mathbb Z}
\newcommand{\C}{\mathcal B}
\newcommand{\Cc}{\mathbb C}
\newcommand{\D}{\mathcal D}
\newcommand{\Oo}{\mathcal O}
\newcommand{\vol}{\rm vol}
\newcommand{\diam}{\rm diam}
\newcommand{\dist}{\rm dist}
\numberwithin{equation}{section}
\title[]{A metric on the moduli space of bodies}
\date{}
\author[H.~Fujita, K.~Ohashi]{Hajime Fujita, Kaho Ohashi}
\subjclass[2010]{Primary 54E35, Secondary 53D05, 52B11} 
\keywords{}
\address[H.~Fujita]{Department of Mathematical and Physical Sciences, Japan Women's University, 2-8-1 Mejirodai, Bunkyo-ku Tokyo, 112-8681, Japan}
\email{fujitah@fc.jwu.ac.jp}
\begin{document}

\maketitle

\begin{abstract}
We construct a metric on the moduli space of bodies in Euclidean space. The moduli space is defined as the quotient space  with respect to the action of integral affine transformations. This moduli space contains a subspace, the moduli space of Delzant polytopes, which can be identified with the moduli space of symplectic toric manifolds. We also discuss related problems. 
\end{abstract}

\section{Introduction}


An {\it $n$-dimensional Delzant polytope} is a convex polytope in $\R^n$ which is simple, rational and smooth\footnote{A vertex of a convex polytope in $\R^n$ is called {\it smooth} if directional vectors of edges at the vertex can be chosen as a basis of $\Z^n$. } at each vertex. There exists a natural bijective correspondence between the set of $n$-dimensional Delzant polytopes and the equivariant isomorphism classes of $2n$-dimensional {\it symplectic toric manifolds}, which is called the {\it Delzant construction} \cite{Delzant}. 
Motivated by this fact Pelayo-Pires-Ratiu-Sabatini studied in \cite{PPRS} the set of $n$-dimensional Delzant polytopes\footnote{In \cite{PPRS} they use the notation $\D_{\mathbb T}$ for the set of Delzant polytopes instead of $\D_n$. } $\D_n$ from the view point of metric geometry. They constructed a metric on $\D_n$ by using the $n$-dimensional volume of the symmetric difference.  They also studied the {\it moduli space of Delzant polytopes} $\tilde\D_n$, which is constructed as a quotient space with respect to natural action of integral affine transformations. It is known that $\tilde\D_n$ corresponds to the set of equivalence classes of symplectic toric manifolds with respect to {\it weak isomorphisms} \cite{KarshonKessler}, and they call the moduli space the {\it moduli space of toric manifolds}. 
In \cite{PPRS} they showed that, 
the metric space $\D_2$ is path connected, 
the moduli space $\tilde\D_n$ is neither complete nor locally compact. 
Here we use the metric topology on $\D_n$ and the quotient topology on $\tilde\D_n$. They also determined the completion of $\D_2$. 

Though in \cite{PPRS} they did not consider a metric on $\tilde \D_n$ it is natural to ask whether $\tilde\D_n$ has a natural metric or not. In this paper we give an answer for this question. Our main result is a construction of a metric on the moduli space $\tilde\D_n$.  We also show that its metric topology is homeomorphic to the quotient topology. Strictly speaking our construction does not rely on any conditions of the Delzant polytope so that it is possible to apply our proof for all {\it bodies}, that is, subsets in $\R^n$ obtained as the closure of bounded open subsets. Such a generalization for non-Delzant case has been increasing in importance in recent years in several areas. 
For example there exists an integrable system on a flag manifold, called the {\it Gelfand-Cetlin system} \cite{GuilleminSternbergGC}, which is studied from several context such as representation theory, algebraic geometry including mirror symmetry and so on (See \cite{HeeChoKim}, \cite{HaradaGC} or \cite{NishinouNoharaUeda} for example).  The Gelfand-Cetlin system is equipped with a torus action but it is not a symplectic toric manifold, however, it associates a non-simple convex polytope (as a base space of the integrable system), called the {\it Gelfand-Cetlin polytope}. More generally the theory of {\it Newton-Okounkov bodies} associated to divisors on varieties is developed recently, and it has great success in representation theory and algebraic geometry, especially for toric degeneration of varieties (See \cite{HaradaKaveh} , \cite{KavehKhovanskii} or \cite{LazarsfeldMustata} for example).

This paper is organized as follows. 
In Section~\ref{The moduli space of bodies} we introduce the moduli space of bodies with respect to the  group of integral affine transformations. 
In Section~\ref{Construction of the metric} we construct a metric $\tilde d$ on the moduli space following \cite{PPRS}. We also show that the metric topology and the quotient topology are homeomorphic to each other. 
In Section~\ref{Proof of the main theorem} we give a proof of our main theorem (Theorem~\ref{mainthm}), that is, $\tilde d$ is a metric on the moduli space.
In Section~\ref{Further problems} we propose several related problems. 

\section{The moduli space of bodies}\label{The moduli space of bodies}
Let $\C_n$ be the set of all bodies (i.e., compact subsets obtained as the closure of open subsets) in $\R^n$. 
Now we introduce the moduli space of bodies following \cite{PPRS}. 
Let $G_n:={\rm AGL}(n,\Z)$ be the integral affine transformation group. Namely $G_n={\rm GL}(n,\Z)\times \R^n$ as a set and the multiplication is defined by 
\[
(A_1, t_1)\cdot(A_2, t_2)=(A_1A_2, A_1t_2+t_1)
\]for each $(A_1, t_1), (A_2, t_2)\in G_n$. 
This group $G_n$ acts on $\C_n$ in a natural way, and $A\in \C_n$ and $B\in\C_n$ are called {\it $G_n$-congruent} if $A$ and $B$ are contained in the same $G_n$-orbit.   

\begin{defn}
The moduli space of the bodies in $\R^n$ with respect to the $G_n$-congruence $\tilde\C_n$ is defined by the quotient 
\[
\tilde\C_n:=\C_n/G_n. 
\]
\end{defn}

\begin{remark}\label{Delzant}
$\C_n$ contains an important subset, the set of {\it $n$-dimensional Delzant polytopes} $\D_n$. It is well known that there exists a bijective correspondence, the {\it Delzant construction}, between the set of all equivariant isomorphism classes of {\it $2n$-dimensional symplectic toric manifolds}. 

On the other hand, the quotient space $\tilde \D_n:=\D_n/G_n$ corresponds to the {\it weak equivalence classes} of $2n$-dimensional symplectic toric manifolds. It is called the {\it moduli space of toric manifolds} in \cite{PPRS}. 
\end{remark}

\section{Construction of the metric}\label{Construction of the metric}

Let $d:\C_n\times \C_n\to \R$ be the function defined by 
\[
d(A,B):={\vol}_n(A\bigtriangleup B)=\int_{\R^n} \chi_{A\bigtriangleup B}d\lambda=\int_{\R^n}|\chi_A-\chi_B|d\lambda
\]for each $A,B\in\C_n$, where $A\bigtriangleup B:=(A\setminus B)\cup (B\setminus A)$ is the symmetric difference of $A$ and $B$,  ${\vol}_n(X)$ denotes the $n$-dimensional volume of $X$ with respect to the $n$-dimensional Lebesgue measure $d\lambda$ and $\chi_X:\R^n\to\R$ denotes the characteristic function of a subset $X$ of $ \R^n$. Then one can see that $d$ is a metric on $\C_n$. 

Note that the function $\tilde\C_n\times\tilde\C_n\ni([P_1], [P_2])\mapsto d(P_1, P_2)$ is not well-defined. So we introduce the function by taking the infimum among the values of $d$. 

\begin{defn}\label{deftilded}
Define a function $\tilde d:\tilde\C_n\times\tilde\C_n\to \R$ by 
\[
\tilde d(\alpha, \beta):=\inf\{d(P_1, P_2) \ | \ [P_1]=\alpha, [P_2]=\beta\}
\] for $(\alpha, \beta)\in \tilde\C_n\times\tilde\C_n$. 
\end{defn}

For $g=(A,t)\in G_n$, since $\det A=\pm 1$ we have $d(gP_1,gP_2)=d(P_1, P_2)$ for any $P_1, P_2\in \C_n$. When we fix representatives $P_1\in \C_n$ of $\alpha\in \tilde\C_n$ and $P_2\in \C_n$ of $\beta\in\tilde\C_n$ it follows that 
\[
\tilde d(\alpha, \beta)=\inf\{d(g_1P_1, g_2P_2) \ | \ g_1, g_2\in G_n\}=\inf\{d(P_1, gP_2) \ | \ g\in G_n\}. 
\]

Clearly $\tilde d$ is a symmetric function. The following is the main theorem of the present paper. 

\begin{theorem}\label{mainthm}
$\tilde d$ is a metric on $\tilde \C_n$. 
\end{theorem}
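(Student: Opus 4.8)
To show that $\tilde d$ is a metric on $\tilde\C_n$, I need to verify three things: symmetry (already noted), the triangle inequality, and the fact that $\tilde d(\alpha,\beta)=0$ implies $\alpha=\beta$ (non-degeneracy); finiteness of $\tilde d$ is immediate since $d$ is real-valued. The plan is to dispatch symmetry and the triangle inequality quickly, and then concentrate essentially all effort on non-degeneracy, which is the genuine obstacle.

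\emph{Triangle inequality.} Fix representatives $P_1,P_2,P_3$ of $\alpha,\beta,\gamma$. Using the reformulation $\tilde d(\alpha,\beta)=\inf\{d(P_1,gP_2)\mid g\in G_n\}$ together with the $G_n$-invariance $d(gP,gQ)=d(P,Q)$, I would argue: given $g,h\in G_n$, by the triangle inequality for the honest metric $d$ on $\C_n$ one has $d(P_1,hP_3)\le d(P_1,gP_2)+d(gP_2,hP_3)=d(P_1,gP_2)+d(P_2,g^{-1}hP_3)$. Taking the infimum over $g$ and then over $h$ (reindexing $g^{-1}h$) yields $\tilde d(\alpha,\gamma)\le\tilde d(\alpha,\beta)+\tilde d(\beta,\gamma)$. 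This is routine manipulation of infima over a group acting by isometries.

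\emph{Non-degeneracy — the main obstacle.} Suppose $\tilde d(\alpha,\beta)=0$; fix representatives $P,Q$, so there is a sequence $g_k=(A_k,t_k)\in G_n$ with $\vol_n(P\bigtriangleup g_kQ)\to 0$. The difficulty is that $G_n$ is not compact, so one cannot simply extract a convergent subsequence of $g_k$; I must first show the sequence $g_k$ is forced to be ``bounded'' in an appropriate sense. The key geometric input is that $P$ and $Q$ are bodies, hence have nonempty interior and finite positive volume, and moreover $\vol_n(g_kQ)=\vol_n(Q)$ for all $k$ since $|\det A_k|=1$. I would argue that if the linear parts $A_k$ were to degenerate (some matrix entry, or the operator norm, tending to $\infty$), then since $\det A_k=\pm1$ the image $A_kQ$ would become ``long and thin'' — more precisely, for any fixed ball $B(0,R)$ one would have $\vol_n(A_kQ\cap B(0,R))\to 0$, because a volume-preserving linear map with exploding norm stretches $Q$ in some direction so that only a vanishing fraction of its volume remains in any fixed bounded region; combined with control of the translations $t_k$ (which must keep $g_kQ$ overlapping the fixed bounded set $P$, forcing $t_k$ bounded once the $A_k$ are controlled), this contradicts $\vol_n(P\cap g_kQ)\to\vol_n(P)>0$. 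Hence $\{A_k\}$ lies in a bounded subset of $\mathrm{GL}(n,\Z)$, which is a \emph{discrete} set, so $\{A_k\}$ is finite; passing to a subsequence, $A_k\equiv A$ is constant, and then $\{t_k\}$ is a bounded sequence in $\R^n$ with a convergent subsequence $t_k\to t$. By continuity of $g\mapsto\vol_n(P\bigtriangleup gQ)$ in $(A,t)$ for fixed $A$ — which follows from dominated convergence applied to $\chi_{gQ}$, or from the standard fact that translation is continuous in $L^1$ — we get $\vol_n(P\bigtriangleup g Q)=0$ for $g=(A,t)\in G_n$. Since $P$ and $gQ$ are both bodies (closures of their interiors), equality of characteristic functions a.e. forces $P=gQ$, hence $[P]=[Q]$, i.e. $\alpha=\beta$.

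The step I expect to be delicate is the boundedness argument: making precise the claim that a sequence of volume-preserving integral linear maps with unbounded norm pushes almost all of the volume of a fixed body out of every fixed bounded region. One clean way is to use that $\mathrm{GL}(n,\Z)\subset\mathrm{SL}^{\pm}(n,\R)$ acts properly on $\R^n\setminus\{0\}$ is too weak; instead I would quantify via singular values: write $A_k=U_k\Sigma_kV_k$ (or use that $\|A_k\|\to\infty$ and $|\det A_k|=1$ force the smallest singular value $\to 0$), choose a direction in which $Q$ has positive ``thickness'', and estimate that the portion of $A_kQ$ inside $B(0,R)$ is squeezed into a slab of vanishing width. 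I would isolate this as a lemma. Once that lemma is in place, the remainder of the non-degeneracy argument, as well as symmetry and the triangle inequality, are straightforward, and the theorem follows.
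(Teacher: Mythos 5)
Your proposal is correct and follows the same overall architecture as the paper: reduce everything to showing that a minimizing sequence $g_k=(A_k,t_k)$ for $\tilde d(\alpha,\beta)=0$ is bounded, use discreteness of ${\rm GL}(n,\Z)$ to extract an eventually-constant linear part and a convergent translation, and conclude via $\vol_n(P\bigtriangleup gQ)=0\Rightarrow P=gQ$ for bodies. Where you genuinely diverge is in the key boundedness lemma. The paper proves that $\|A_m\|\to\infty$ leads to a contradiction by purely combinatorial-geometric means: an induction on faces (Lemmas~\ref{keylemma1} and \ref{keylemma1gen}) shows that the image $g_mC'$ of a large cube containing $P_2$ gets squeezed between two parallel hyperplanes at distance $\to 0$, and a separate elementary lemma (Lemma~\ref{keylemma2}) shows a small cube inside $P_1$ always has a vertex far from any hyperplane, so a fixed-radius ball inside $P_1$ misses $g_mP_2$. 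You instead propose the singular-value route: $|\det A_k|=1$ and $\|A_k\|\to\infty$ force $\sigma_{\min}(A_k)\to 0$, so $A_kQ$ (hence $g_kQ$, for any translation) lies in a slab of vanishing width, whence $\vol_n(P\cap g_kQ)\to 0<\vol_n(P)$. This is the same geometric phenomenon, but your proof of it is shorter and arguably cleaner, at the cost of invoking the singular value decomposition where the paper stays entirely elementary; note also that your slab bound is uniform in $t_k$, so the phrase ``combined with control of the translations'' is unnecessary for ruling out unbounded $A_k$ (translations only need to be controlled afterwards, exactly as in the paper's case (ii)). One detail to make explicit when you write up the lemma: from $\prod_i\sigma_i(A_k)=1$ and $\sigma_{\max}\to\infty$ you get $\sigma_{\min}^{\,n-1}\le\prod_{i\ge 2}\sigma_i=1/\sigma_{\max}\to 0$, which is what actually yields the vanishing slab width.
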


We show this theorem by proving the triangle inequality in Proposition~\ref{triineq} and the positiveness\footnote{It is known that the function defined as in Definition~\ref{deftilded} is a  pseudo-metric and it satisfies the positiveness if and only if each $G_n$-orbit is a closed subset in $\C_n$. See \cite[p.80]{Shioyabook} for example.  In fact our proof of Proposition~\ref{positiveness} is a proof to show the closedness of the orbit.} in Proposition~\ref{positiveness}. 

Due to Theorem~\ref{mainthm},  we can consider the metric topology $\Oo_{\tilde d}$ of $\tilde\C_n$. On the other hand by using the metric topology of $\C_n$ and the natural projection $\pi:\C_n\to\tilde\C_n$,  we also have the quotient topology $\Oo_{\pi}$ of $\tilde\C_n$. We have the following. 

\begin{theorem}
Two topological spaces $(\tilde\C_n,\Oo_{\tilde d})$ and $(\tilde\C_n, \Oo_{\pi})$ are homeomorphic to each other. In particular $(\tilde\C_n, \Oo_\pi)$ is a Hausdorff space. 
\end{theorem}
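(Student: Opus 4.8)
The plan is to use the standard fact that for a metric $d$ on $\C_n$ invariant under a group action $G_n$, the induced function $\tilde d$ on the quotient (assuming Theorem~\ref{mainthm}, so that it is a genuine metric) always generates a topology coarser than or equal to the quotient topology, and then to prove the reverse inclusion. More precisely, I would first show that the projection $\pi:(\C_n,\Oo_d)\to(\tilde\C_n,\Oo_{\tilde d})$ is continuous: since $\tilde d(\pi(P_1),\pi(P_2))\le d(P_1,P_2)$ directly from Definition~\ref{deftilded}, $\pi$ is $1$-Lipschitz, hence continuous, and therefore by the universal property of the quotient topology every $\Oo_{\tilde d}$-open set is $\Oo_\pi$-open, i.e. $\Oo_{\tilde d}\subseteq\Oo_\pi$.

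For the reverse inclusion $\Oo_\pi\subseteq\Oo_{\tilde d}$ I would show that $\pi:(\C_n,\Oo_d)\to(\tilde\C_n,\Oo_{\tilde d})$ is an open map; combined with surjectivity and continuity this forces the two topologies to coincide. So let $U\subseteq\C_n$ be $d$-open and let $\alpha=\pi(P)\in\pi(U)$ with $P\in U$. Choose $\varepsilon>0$ with the open $d$-ball $B_d(P,\varepsilon)\subseteq U$. I claim the open $\tilde d$-ball $B_{\tilde d}(\alpha,\varepsilon)$ is contained in $\pi(U)$. Indeed, if $\tilde d(\alpha,\beta)<\varepsilon$, then using the description $\tilde d(\alpha,\beta)=\inf\{d(P,gP_2)\mid g\in G_n\}$ after fixing the representative $P$ of $\alpha$ and any representative $P_2$ of $\beta$, there exists $g\in G_n$ with $d(P,gP_2)<\varepsilon$; hence $gP_2\in B_d(P,\varepsilon)\subseteq U$, and since $\pi(gP_2)=\pi(P_2)=\beta$ we get $\beta\in\pi(U)$. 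This proves $B_{\tilde d}(\alpha,\varepsilon)\subseteq\pi(U)$, so $\pi(U)$ is $\tilde d$-open, i.e. $\pi$ is open. An open continuous surjection induces the quotient topology on the target, so $\Oo_\pi=\Oo_{\tilde d}$.

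The ``in particular'' clause is then immediate: $(\tilde\C_n,\Oo_{\tilde d})$ is metrizable by Theorem~\ref{mainthm}, hence Hausdorff, and since $\Oo_\pi=\Oo_{\tilde d}$ the space $(\tilde\C_n,\Oo_\pi)$ is Hausdorff as well. I do not expect a serious obstacle here: the only nontrivial input is Theorem~\ref{mainthm} (that $\tilde d$ is an honest metric, in particular satisfies positivity, equivalently that the $G_n$-orbits are closed), which is assumed; everything else is the routine ``invariant pseudometric on a quotient'' argument, and the key observation making $\pi$ open is precisely that $\tilde d$ is defined as an infimum over the orbit while $d$ is $G_n$-invariant, so a small $\tilde d$-ball around $\alpha$ is exactly the $\pi$-image of the corresponding small $d$-ball around any fixed representative.
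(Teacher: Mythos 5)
Your proposal is correct and is essentially the paper's argument (Proposition~\ref{prophomeo}) in different packaging: the paper directly verifies the two inclusions of topologies using exactly your two key facts, namely that $\tilde d(\pi(P_1),\pi(P_2))\le d(P_1,P_2)$ for one direction and that the infimum defining $\tilde d$ can be realized within $\varepsilon$ by some $g\in G_n$ for the other. Phrasing these as ``$\pi$ is $1$-Lipschitz and open, hence induces the quotient topology'' is a clean but equivalent formulation, and the Hausdorff conclusion follows as you say.
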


This theorem follows from the following general proposition. 

\begin{prop}\label{prophomeo}
Suppose that $(X,d)$ is a metric space and a group $G$ acts on $X$ in an isometric way. Let $\tilde X:=X/G$ be the quotient space and assume that the function $\tilde d:\tilde X\times \tilde X\to \R$ defined by 
\[
\tilde d([x_1], [x_2])=\inf\{d(x_1, gx_2) \ | \ g\in G\} \quad (([x_1], [x_2])\in \tilde X\times \tilde X) 
\] is a metric on $\tilde X$. Then the quotient topology $\Oo_\pi$ and the metric topology $\Oo_{\tilde d}$ on $\tilde X$ are homeomorphic to each other. 
\end{prop}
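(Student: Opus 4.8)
The plan is to show that the projection $\pi\colon X\to\tilde X$ is both continuous and open when $\tilde X$ carries the metric topology $\Oo_{\tilde d}$; since the quotient topology $\Oo_\pi$ is by definition the finest topology making $\pi$ continuous, continuity of $\pi$ gives $\Oo_{\tilde d}\subseteq\Oo_\pi$, and openness of $\pi$ gives the reverse inclusion. The key observation, valid because $G$ acts by isometries, is the formula
\[
\tilde d([x],[y])=\inf_{g\in G}d(x,gy)=\dist(Gx,Gy)
\]
for the distance between the orbits, together with the estimate $\tilde d(\pi(x),\pi(x'))\le d(x,x')$ obtained by taking $g=e$ in the infimum. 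The latter is precisely the statement that $\pi$ is $1$-Lipschitz, hence continuous, so every $\tilde d$-open set pulls back to an open set and $\Oo_{\tilde d}\subseteq\Oo_\pi$.

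For the reverse inclusion I would prove that $\pi$ is an open map. Let $U\subseteq X$ be open; I must show $\pi(U)$ is $\tilde d$-open. Fix $\alpha\in\pi(U)$ and choose $x\in U$ with $\pi(x)=\alpha$, then pick $\varepsilon>0$ with the open $d$-ball $B_d(x,\varepsilon)\subseteq U$. I claim the $\tilde d$-ball $B_{\tilde d}(\alpha,\varepsilon)$ is contained in $\pi(U)$. Indeed, if $\tilde d(\alpha,\beta)<\varepsilon$, write $\beta=\pi(y)$; then $\inf_{g}d(x,gy)<\varepsilon$, so there is $g\in G$ with $d(x,gy)<\varepsilon$, i.e.\ $gy\in B_d(x,\varepsilon)\subseteq U$, whence $\beta=\pi(y)=\pi(gy)\in\pi(U)$. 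This shows $\pi(U)$ is a neighborhood of each of its points in $\Oo_{\tilde d}$, so $\pi(U)\in\Oo_{\tilde d}$. Consequently every $\Oo_\pi$-open set $V$, whose preimage $\pi^{-1}(V)$ is open in $X$ and satisfies $\pi(\pi^{-1}(V))=V$ (as $\pi$ is surjective), is $\tilde d$-open, giving $\Oo_\pi\subseteq\Oo_{\tilde d}$.

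Combining the two inclusions yields $\Oo_\pi=\Oo_{\tilde d}$, so the identity map of $\tilde X$ is the desired homeomorphism, and the Hausdorff property is inherited from the metric topology. I do not expect any serious obstacle here: the entire argument is a soft point-set manipulation, and the only place the hypotheses genuinely enter is the isometry assumption, which is what makes the orbit-distance formula for $\tilde d$ well behaved (so that taking $g=e$ gives the Lipschitz bound and translating by $g$ inside a ball stays inside the ball). The assumption that $\tilde d$ is already a metric is used only to know that $\Oo_{\tilde d}$ is a genuine topology; the proof does not re-derive it.
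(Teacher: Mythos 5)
Your proof is correct and is essentially the paper's argument in different packaging: the paper proves the two inclusions directly, using exactly your two key estimates — the $1$-Lipschitz bound $\tilde d(\pi(x),\pi(x'))\le d(x,x')$ from taking $g=e$, and the selection of $g$ with $d(x,gy)<\varepsilon$ so that $gy$ lands in a ball inside the saturated preimage. Framing the second step as openness of $\pi$ is a cosmetic difference only.
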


\begin{proof}
Suppose that $U\in \Oo_{\pi}$ and take $\alpha\in U$. Fix a representative $x\in \pi^{-1}(\alpha)\subset\pi^{-1}(U)$. Since $\pi^{-1}(U)$ is an open subset in $X$ there exists $\delta>0$ such that $B_{\delta}(x)(\subset X)$, the open ball of radius $\delta$ centered at $x$, is contained in $\pi^{-1}(U)$. For this $\delta$ let $\tilde B_{\delta}(\alpha)\subset\tilde X$ be the open ball of radius $\delta$ centered at $\alpha$. For each $\beta\in\tilde B_{\delta}(\alpha)$ and a representative $x'\in\pi^{-1}(\beta)$ since 
\[
\tilde d(\alpha, \beta)=\inf_{g}\{d(x, gx')\}<\delta
\]there exists $g\in G$ such that $d(x,gx')<\delta$. It implies that $gx'\in B_{\delta}(x)\subset \pi^{-1}(U)$, and hence, $\beta=\pi(gx')\in U$. So we have $\tilde B_{\delta}(\alpha)\subset U$, and it means $U\in \Oo_{\tilde d}$. 

Conversely suppose that $U\in \Oo_{\tilde d}$ and take $x\in \pi^{-1}(U)$.  
Then there exists $\delta>0$  such that $\tilde B_{\delta}(\pi(x))\subset U$. For this $\delta>0$ consider the open ball $B_{\delta}(x)$ in $X$. Take $x'\in B_{\delta}(x)$ then we have 
$\tilde d(\pi(x), \pi(x'))\leq d(x, x')<\delta$, and hence, $\pi(x')\in \tilde B_{\delta}(\pi(x))\subset U$. It implies that $x'\in \pi^{-1}(U)$. So we have $B_{\delta}(x)\subset \pi^{-1}(U)$, and it means $U\in \Oo_{\pi}$.  
\end{proof}

\section{Proof of the main theorem}\label{Proof of the main theorem}

\begin{prop}\label{triineq}
The function $\tilde d$ defined in Definition~\ref{deftilded} satisfies the triangle inequality. 
\end{prop}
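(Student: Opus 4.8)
The plan is to reduce the triangle inequality for $\tilde d$ on $\tilde\C_n$ to the triangle inequality for $d$ on $\C_n$, exploiting the $G_n$-invariance of $d$ recorded just before Theorem~\ref{mainthm}. So I would fix classes $\alpha,\beta,\gamma\in\tilde\C_n$ together with representatives $P_1\in\alpha$, $P_2\in\beta$, $P_3\in\gamma$, so that $\tilde d(\alpha,\beta)=\inf_{g\in G_n}d(P_1,gP_2)$ and $\tilde d(\beta,\gamma)=\inf_{h\in G_n}d(P_2,hP_3)$ by the reformulation of $\tilde d$ in terms of fixed representatives.

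Next, fix an arbitrary $\varepsilon>0$ and choose $g,h\in G_n$ with $d(P_1,gP_2)<\tilde d(\alpha,\beta)+\varepsilon$ and $d(P_2,hP_3)<\tilde d(\beta,\gamma)+\varepsilon$. The key point is that $g$ acts isometrically, so $d(gP_2,(gh)P_3)=d(P_2,hP_3)<\tilde d(\beta,\gamma)+\varepsilon$. The triangle inequality for the metric $d$ on $\C_n$ then gives
\[
d(P_1,(gh)P_3)\leq d(P_1,gP_2)+d(gP_2,(gh)P_3)<\tilde d(\alpha,\beta)+\tilde d(\beta,\gamma)+2\varepsilon .
\]
Since $gh\in G_n$, the left-hand side is $\geq\tilde d(\alpha,\gamma)$ by definition of the infimum, hence $\tilde d(\alpha,\gamma)<\tilde d(\alpha,\beta)+\tilde d(\beta,\gamma)+2\varepsilon$, and letting $\varepsilon\to 0$ yields the claim.

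I do not expect any real obstacle here: the argument uses only that $d$ is an honest metric on $\C_n$ (already observed) and that $G_n$ acts by isometries for $d$ (which holds because $|\det A|=1$ for $A\in{\rm GL}(n,\Z)$, as noted). The one subtlety to handle carefully is that the infimum defining $\tilde d$ need not be attained, which is precisely why one approximates the two infima to within $\varepsilon$ rather than working with ``optimal'' $g$ and $h$; one should also note that $\tilde d$ is finite-valued, so the inequality is meaningful, which is clear since each individual $d(P_1,gP_2)$ is finite. The genuinely substantive component of Theorem~\ref{mainthm} lies not in this proposition but in the positiveness statement (Proposition~\ref{positiveness}), where one must establish closedness of the $G_n$-orbits in $(\C_n,d)$.
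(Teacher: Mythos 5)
Your argument is correct and is essentially the same as the paper's: both reduce to the triangle inequality for $d$ on $\C_n$ combined with the $G_n$-invariance $d(gX,gY)=d(X,Y)$, the only difference being that you approximate the two infima to within $\varepsilon$ while the paper manipulates the infima directly. No issues.
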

\begin{proof}
For $\alpha, \beta,\gamma\in \tilde\C_n$ we take and fix their representatives $P_1, P_2, P_3\in\C_n$. By the triangle inequality of $d$ we have, 
\[
d(P_1, gP_2) \leq d(P_1, g'P_3)+d(g'P_3, gP_2)= d(P_1, g'P_3)+d(g^{-1}g'P_3, P_2)
\] for any $g,g'\in G_n$. By taking the infimum for $g$ we have 
\begin{eqnarray*}
\tilde d(\alpha, \beta)&\leq& d(P_1, g'P_3)+\inf_g\{d(g^{-1}g'P_3, P_2)\}\\
&=&d(P_1, g'P_3)+\inf_g\{d(g''g^{-1}g'P_3,g''P_2)\} \\ 
&\leq&d(P_1, g'P_3)+d(P_3, g''P_2)\}
\end{eqnarray*} for any $g''\in G_n$. By taking the infimum for $g', g''\in G_n$ we have the triangle inequality 
\[
\tilde d(\alpha,\beta)\leq \tilde d(\alpha,\gamma)+\tilde d(\gamma,\beta). 
\]
\end{proof}

\begin{remark}
The above proof also works for the same general situation in Proposition~\ref{prophomeo}. 
\end{remark}

To show the positiveness of $\tilde d$ we use the following two elementary lemmas. 

\begin{lemma}\label{keylemma2}
For $a>0$ let $C$ be a cube $C:=[-2a, 2a]^n=[-2a,2a]\times[-2a, 2a]\times\cdots\times[-2a, 2a]\subset \R^n$. Then for any affine hyperplane $H$ in $\R^n$ there exists a vertex $v$ of $C$ such that 
\[
{\dist}(v,H)>a. 
\]
\end{lemma}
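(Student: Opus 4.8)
The plan is to argue by contradiction: suppose every vertex $v$ of $C$ satisfies $\dist(v,H)\leq a$. Write $H=\{x\in\R^n : \langle u,x\rangle = c\}$ for a unit normal vector $u\in\R^n$ and $c\in\R$, so that $\dist(x,H)=|\langle u,x\rangle - c|$ for every $x$. The assumption then says $|\langle u,v\rangle - c|\leq a$ for all $2^n$ vertices $v$. Since the vertices are exactly the points $v=(\pm 2a,\dots,\pm 2a)$, this means the $2^n$ real numbers $\langle u,v\rangle$ all lie in the interval $[c-a,c+a]$ of length $2a$.

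The key step is to exhibit two vertices whose values $\langle u, v\rangle$ differ by more than $2a$, which contradicts the previous paragraph. For this, choose signs $\epsilon_i\in\{\pm 1\}$ with $\epsilon_i u_i = |u_i|$ for each coordinate, and let $v^+=(2a\epsilon_1,\dots,2a\epsilon_n)$ and $v^-=-v^+$; both are vertices of $C$. Then
\[
\langle u,v^+\rangle - \langle u,v^-\rangle = 2\langle u,v^+\rangle = 4a\sum_{i=1}^n |u_i| = 4a\,\|u\|_1 \geq 4a\,\|u\|_2 = 4a > 2a,
\]
using $\|u\|_1\geq\|u\|_2$ for the inequality and $\|u\|_2=1$ since $u$ is a unit vector. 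This is the desired contradiction, so some vertex $v$ must satisfy $\dist(v,H)>a$.

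I do not expect any real obstacle here; the only point requiring a little care is to make sure the strict inequality is genuinely obtained, i.e.\ that one cannot have $\dist(v,H)=a$ at the extremal vertex. This is automatic from the computation above: the two values $\langle u,v^+\rangle$ and $\langle u,v^-\rangle$ are symmetric about $2c$ only if $c$ is their midpoint, and even in the worst case their spread $4a$ strictly exceeds the length $2a$ of $[c-a,c+a]$, so at least one of $v^+,v^-$ lies strictly outside, giving $\dist(v,H)>a$ for that vertex. An alternative phrasing avoiding contradiction: directly estimate $\max_v \dist(v,H)\geq \tfrac12\big(\langle u,v^+\rangle-\langle u,v^-\rangle\big)=2a>a$, since the maximum of $|t-c|$ over $t$ in a set is at least half the diameter of that set.
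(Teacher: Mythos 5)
Your proof is correct, but it takes a genuinely different route from the paper's. Both arguments start from the contradiction hypothesis that every vertex $v$ satisfies ${\dist}(v,H)\leq a$, i.e.\ that all $2^n$ values $\langle u,v\rangle$ lie in an interval of length $2a$. From there the paper squares the inequality $a\geq |\nu\cdot v_i-2c|$ at each vertex and sums over all $2^n$ vertices; the cross terms $\pm\nu_k\nu_l$ and $\pm\nu_k$ cancel in the sum, leaving the contradiction $0\leq 2^n(-3a^2-4c^2)<0$. That is an averaging (second-moment) argument which never identifies which vertex works. You instead exhibit an explicit extremal pair: the antipodal vertices $v^{\pm}=\pm(2a\epsilon_1,\ldots,2a\epsilon_n)$ aligned with the normal, whose values of $\langle u,\cdot\rangle$ differ by $4a\|u\|_1\geq 4a\|u\|_2=4a>2a$, using $\|u\|_1\geq\|u\|_2$. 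Your argument is shorter, avoids the bookkeeping of the squared sums, and yields the sharper quantitative conclusion $\max_v {\dist}(v,H)\geq 2a\|u\|_1\geq 2a$, i.e.\ some vertex lies at distance at least $2a$ rather than merely more than $a$ (which also disposes cleanly of the strict-inequality issue you flag). Both proofs are elementary and valid; the difference is purely one of technique, with yours giving the better constant.
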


\begin{proof}
Suppose that $H$ is an affine hyperplane with defining equation 
\[
f(x):=\nu\cdot x-2c=0 \quad (x\in\R^n), 
\]where $\cdot$ denotes the Euclidean inner product, $\nu=(\nu_1, \ldots, \nu_n)\in \R^n$ is a unit normal vector of $H$ and $c\in \R$. Let $v_i \ (i=1,2,\ldots, 2^n)$ be the vertices of $C$. Suppose that 
\[
{\dist}(v_i,H)\leq a  
\]for any $i=1,2,\ldots, 2^n$. Then for any $v_i$ we have 
\[
a\geq {\dist}(v_i,H)=|f(v_i)|=2\left|a\sum_{k=1}^n(\pm\nu_k)-c\right|. 
\]By taking the square we have 
\[
a^2\geq
4\left(a^2\left(1+2\sum_{k\neq l}(\pm \nu_k\nu_l)\right)-2ac\sum_{k=1}^n(\pm \nu_k)+c^2\right), 
\]
and hence, we have inequalities 
\[
8a^2\sum_{k\neq l}(\pm \nu_k\nu_l)-8ac\sum_{k=1}^n(\pm \nu_k)\leq -3a^2-4c^2<0
\]for all $i=1,\ldots, 2^n$. On the other hand the sum of the left hand of the inequality for all $i$ is equal to $0$, we have a contradiction 
\[
0\leq 2^n(-3a^2-4c^2)<0.
\]
So we have ${\rm dist}(v_i, H)>a$ for some $i$.
\end{proof}

\begin{lemma}\label{keylemma1}
Let $C$ be an $n$-dimensional cube in $\R^n$. For a sequence $\{g_m\}_m\subset G_n$ we put $C_m:=g_mC$. If ${\diam}(C_m)$ goes to $\infty$ as $m\to \infty$, then there exist a sequence $\{F_m\}_m$ of $n-1$-dimensional faces of $C_m$ such that
\[
{\dist}(H_m,H_m')\to 0 \ (m\to \infty), 
\] where $H_m$ is the $n-1$-dimensional affine subspace containing $F_m$ and $H_m'$ is the other affine hyperplane containing the other $n-1$-dimensional face in $C_m$ which is parallel to $F_m$. 
\end{lemma}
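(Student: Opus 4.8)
We are given an $n$-cube $C$ and a sequence $g_m = (A_m, t_m) \in G_n$ with $\diam(C_m) \to \infty$ where $C_m = g_m C$. Note that $C_m = A_m C + t_m$ is a parallelepiped (the affine image of a cube), so it has $n$ pairs of parallel facets. If $w_1, \dots, w_n$ are the edge vectors of $C$ emanating from a common vertex (mutually orthogonal, all of the same length $\ell$), then the edge vectors of $C_m$ are $A_m w_1, \dots, A_m w_n$. The diameter of $C_m$ is $|A_m w_1 + \cdots + A_m w_n|$ up to a bounded factor, and it is comparable to $\max_i |A_m w_i|$. So $\diam(C_m) \to \infty$ forces $\max_i |A_m w_i| \to \infty$.

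**The key geometric idea.** Let me think about which pair of parallel facets to choose. The facet $F_m^{(i)}$ of $C_m$ spanned by all edge directions except $A_m w_i$ lies in an affine hyperplane $H_m^{(i)}$, and the opposite parallel facet lies in $H_m'^{(i)} = H_m^{(i)} + A_m w_i$. The distance between these two parallel hyperplanes is the length of the component of $A_m w_i$ orthogonal to $H_m^{(i)}$, i.e.\ $\dist(H_m^{(i)}, H_m'^{(i)}) = \frac{|\det A_m|\cdot \vol_n(C)^{?}}{\vol_{n-1}(F_m^{(i)})}$ — more precisely, since $\vol_n(C_m) = \dist(H_m^{(i)}, H_m'^{(i)}) \cdot \vol_{n-1}(F_m^{(i)})$ and $\vol_n(C_m) = |\det A_m| \cdot \vol_n(C) = \vol_n(C)$ because $|\det A_m| = 1$, we get
\[
\dist(H_m^{(i)}, H_m'^{(i)}) = \frac{\vol_n(C)}{\vol_{n-1}(F_m^{(i)})}.
\]
So the claim reduces to showing: we can choose $i = i(m)$ so that $\vol_{n-1}(F_m^{(i)}) \to \infty$. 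Now $F_m^{(i)}$ is (a translate of) the parallelepiped spanned by $\{A_m w_j : j \neq i\}$, whose $(n-1)$-volume is at least a constant times $\prod_{j\neq i}$ of the relevant quantities; more usefully, $\prod_{i=1}^n \vol_{n-1}(F_m^{(i)})$ relates to $\vol_n(C_m)^{n-1} = \vol_n(C)^{n-1}$ times the product of the heights, which we don't control directly. Let me instead argue via diameters: some edge vector $A_m w_{i_0}$ has $|A_m w_{i_0}| \geq \diam(C_m)/n \to \infty$. The facet $F_m^{(i)}$ for $i \neq i_0$ contains the edge vector $A_m w_{i_0}$, and also contains the other $n-2$ edge vectors; since the $w_j$ form an orthogonal set and $A_m$ is invertible with $|\det A_m| = 1$, the images $A_m w_j$ are linearly independent and $\vol_{n-1}$ of their span (for the $n-1$ indices $\neq i$) is bounded below by a positive constant independent of $m$ — indeed $\vol_{n-1}(F_m^{(i)}) \geq c\,|A_m w_{i_0}|$ would follow if the remaining vectors stay "bounded below" in the quotient, but they need not.

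**The clean argument via the shortest edge.** Here is the approach I would actually carry out. Pick, for each $m$, an index $i(m)$ achieving $\min_i |A_m w_i|$, i.e.\ the \emph{shortest} edge of $C_m$. Set $F_m := F_m^{(i(m))}$, the facet \emph{not} containing this shortest edge direction. Then $F_m$ is spanned by the $n-1$ longer edge vectors, and $\dist(H_m, H_m') = \vol_n(C)/\vol_{n-1}(F_m)$. It remains to show $\vol_{n-1}(F_m)\to\infty$. We have $\vol_n(C) = \vol_n(C_m) = \dist(H_m,H_m')\cdot \vol_{n-1}(F_m)$, and also, writing $h_m$ for the shortest edge length $|A_m w_{i(m)}|$, clearly $\dist(H_m, H_m') \leq h_m$ since the opposite facet is obtained by translating by $A_m w_{i(m)}$, a vector of length $h_m$. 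Hence $\vol_{n-1}(F_m) = \vol_n(C)/\dist(H_m,H_m') \geq \vol_n(C)/h_m$. If $h_m$ stays bounded this does not immediately help, so I must also show $h_m$ is bounded, or handle the case it is not. But if $h_m$ is the \emph{shortest} edge and $h_m \to \infty$ along a subsequence, then \emph{all} edges go to infinity, and then I can instead use the \emph{longest}-facet / shortest-edge pair differently: pick the facet that omits the \emph{longest} edge $A_m w_{j_0}$ with $|A_m w_{j_0}| = \diam$-comparable; that facet is spanned by $n-1$ vectors each of which, in this subsequential case, has length $\to\infty$ — but their $(n-1)$-volume could still collapse. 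The genuinely safe move is: among all $\binom{n}{n-1} = n$ facet-pairs, the products $\prod_{i} \dist(H_m^{(i)}, H_m'^{(i)})$ satisfy $\prod_i \dist(H_m^{(i)}, H_m'^{(i)}) \leq \vol_n(C_m) \cdot (\text{shape factor})$... rather than belabor this, the correct and simplest route, which is the main obstacle to get exactly right, is: \textbf{show that since $|\det A_m| = 1$, the largest singular value $\sigma_1(A_m) \to \infty$ forces the smallest singular value $\sigma_n(A_m) \to 0$}; the facet orthogonal (in $C$) to the shortest semi-axis direction of $A_m$ maps to a facet of $C_m$ whose inter-hyperplane distance is $O(\sigma_n(A_m)) \to 0$. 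Concretely, let $\sigma_1 \geq \cdots \geq \sigma_n > 0$ be the singular values of $A_m$, with $\prod \sigma_k = 1$ and $\sigma_1 \to \infty$ (which follows from $\diam(C_m)\to\infty$); then $\sigma_n \leq (\prod_{k\geq 2}\sigma_k)^{-1} \cdot 1 \cdot$ ... from $\sigma_1\cdots\sigma_n = 1$ and $\sigma_1\to\infty$ we get $\sigma_2\cdots\sigma_n = 1/\sigma_1 \to 0$, hence $\sigma_n \to 0$. Let $u_m$ be a right-singular unit vector of $A_m$ for $\sigma_n$; the pair of facets of the cube $C$ with outward normal nearest to $\pm u_m$ — after rotating coordinates, simply take the facet pair whose normal is the standard basis vector $e_{k(m)}$ closest to $u_m$, so $|\langle e_{k(m)}, u_m\rangle| \geq 1/\sqrt n$ — maps to a facet pair of $C_m$ whose separation is $\ell \cdot |A_m e_{k(m)}|$-component... cleanest: the separation of the $k$-th facet pair of $C_m$ equals $\ell / |(A_m^{-1})^T e_k|$ (the reciprocal of the length of the $k$-th row of $A_m^{-1}$), and since $\|A_m^{-1}\|_{op} = 1/\sigma_n \to \infty$, some row of $A_m^{-1}$ has length $\to\infty$, so the corresponding facet separation $\to 0$.

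**Summary of the plan and the hard part.** \emph{Step 1:} Write $C_m = A_m C + t_m$; translations are irrelevant to facet separations, so assume $t_m = 0$ and, after an integral linear change of coordinates on the source, assume $C = [0,\ell]^n$ so its facet pairs have outward normals $\pm e_1, \dots, \pm e_n$. \emph{Step 2:} Compute that the distance between the $k$-th pair of parallel facets of $C_m$ equals $\ell \big/ \big| \big((A_m^{-1})^{T} e_k\big)\big| = \ell / \|k\text{-th row of } A_m^{-1}\|$. \emph{Step 3:} Show $\diam(C_m)\to\infty \iff \|A_m\|_{op}\to\infty$, and since $|\det A_m| = 1$ this gives $\|A_m^{-1}\|_{op} \to \infty$ (because $\|A_m^{-1}\|_{op} = \sigma_1(A_m)/\text{(product of smaller }\sigma\text{'s)} \cdot$ ... precisely $\|A_m^{-1}\|_{op} = 1/\sigma_n(A_m)$ and $\sigma_n(A_m) \leq |\det A_m|^{1/n}\cdot(\sigma_1/\sigma_n)^{-(n-1)/n}\to 0$; equivalently $\sigma_2\cdots\sigma_n = 1/\sigma_1 \to 0 \Rightarrow \sigma_n\to 0$). \emph{Step 4:} Some row of $A_m^{-1}$ has norm $\geq \|A_m^{-1}\|_{op}/\sqrt n \to \infty$; let $k(m)$ index such a row and set $F_m$ to be one of the corresponding pair of facets of $C_m$, $H_m, H_m'$ the two parallel hyperplanes; then by Step 2, $\dist(H_m, H_m') = \ell/\|k(m)\text{-th row}\| \to 0$. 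The main obstacle — and the only place requiring care — is Step 2 together with the dualization in Step 3: getting the exact relation between facet separations of $A_m C$ and the rows of $A_m^{-1}$, and confirming that the $|\det A_m| = 1$ constraint genuinely forces the operator norm of the inverse to blow up (this is where the integrality/unimodularity, via $\det = \pm 1$, is used). Everything else is bookkeeping. Note Lemma~\ref{keylemma2} is not needed for this approach, though it gives an alternative, more hands-on route: it shows directly that in a large cube some vertex is far from any hyperplane, which one could combine with the pigeonhole on facets; I would mention this but prefer the singular-value argument as shorter.

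\begin{proof}[Proof sketch]
This is a proposal; see above.
\end{proof}
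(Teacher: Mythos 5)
Your final plan (Steps 1--4) is correct, but it takes a genuinely different route from the paper. The paper proves this lemma as the top case of a more general statement (Lemma~\ref{keylemma1gen}) about $k$-dimensional faces, proved by induction on $k$ using only the elementary identity $\vol_k(F_m)=\dist(H_m,H_m')\,\vol_{k-1}(E_m)$ together with the boundedness of $\vol_n(C_m)=\vol_n(C)$: either some facet's volume blows up (done), or all facet volumes stay bounded and one descends to lower-dimensional faces along a long edge. Your argument instead goes through linear algebra: facet separations of $A_mC$ equal $\ell/\|r_k\|$ for the rows $r_k$ of $A_m^{-1}$, and $\diam(C_m)\to\infty$ forces $\sigma_1(A_m)\to\infty$, hence $\sigma_n(A_m)^{n-1}\le\sigma_2\cdots\sigma_n=1/\sigma_1\to 0$ by unimodularity, hence $\|A_m^{-1}\|_{op}\to\infty$ and some row norm blows up. Both are correct; yours is shorter and avoids the induction, while the paper's is purely synthetic and yields the more general face-wise statement. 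Two small points to clean up: (a) a general cube $C$ is reduced to $[0,\ell]^n$ by an \emph{orthogonal} (not integral) change of coordinates --- harmless, since composing $A_m$ with a rotation preserves $|\det|=1$ and the singular values, but say it correctly; (b) the two exploratory middle paragraphs (the ``shortest edge'' attempt) contain dead ends you yourself abandon and should be deleted from a final write-up, keeping only Steps 1--4. You are also right that Lemma~\ref{keylemma2} is not needed here; in the paper it is used separately in the proof of Proposition~\ref{positiveness}, not in the proof of this lemma.
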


To show this lemma by induction we generalize it in the following way as in Figure~\ref{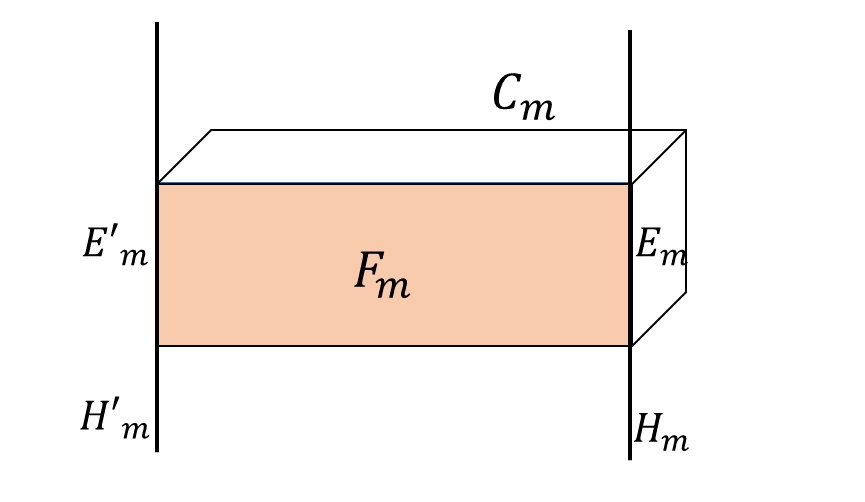}.

\begin{lemma}\label{keylemma1gen}
Let $C$ be an $n$-dimensional cube in $\R^n$. For a sequence $\{g_m\}_m\subset G_n$ we put $C_m:=g_mC$. For a sequence $\{F_m\}_m$ of $k(\geq 2)$-dimensional faces of $C_m$, if the $k$-dimensional volumes (with respect to the $k$-dimensional Hausdorff measure) $\{{\vol}_k(F_m)\}_m$ is bounded and the diameters ${\diam}(F_m)$ goes to $\infty$ as $m\to \infty$, then there exist a sequence $\{E_m\}_m$ of $k-1$-dimensional faces of $F_m$ such that
\[
{\dist}(H_m,H_m')\to 0 \ (m\to \infty), 
\] where $H_m$ is the $k-1$-dimensional affine subspace containing $E_m$ and $H_m'$ is the other $k-1$-dimensional affine subspace containing the other $k-1$-dimensional face in $F_m$ which is parallel to $E_m$. 
\end{lemma}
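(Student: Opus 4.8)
The plan is to prove Lemma~\ref{keylemma1gen} by induction on $k$; Lemma~\ref{keylemma1} is then the case $k=n$ (with $F_m:=C_m$ playing the role of the ``$n$-dimensional face''), where the boundedness hypothesis is automatic because ${\vol}_n(C_m)={\vol}_n(C)$ as $\det A_m=\pm1$. Throughout, $C_m$ is a parallelepiped, so every $k$-dimensional face $F$ of it is a $k$-parallelepiped; write $u_1,\dots,u_k$ for its edge vectors (a subset of the $n$ edge vectors of $C_m$), and for $j\in\{1,\dots,k\}$ put $h_j(F):={\dist}\bigl(u_j,\ {\rm span}\{u_a:a\neq j\}\bigr)$. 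This $h_j(F)$ is exactly the distance between the two parallel $(k-1)$-dimensional faces of $F$ obtained by deleting $u_j$, and one has the elementary identity ${\vol}_k(F)=h_j(F)\cdot{\vol}_{k-1}(F^{(j)})$, where $F^{(j)}$ is the $(k-1)$-face of $F$ with $u_j$ deleted. Letting $V$ be a bound for ${\vol}_k(F_m)$, the conclusion of Lemma~\ref{keylemma1gen} for the sequence $\{F_m\}$ is equivalent to $w(F_m):=\min_{1\le j\le k}h_j(F_m)\to0$, since one may take $E_m$ to be a $(k-1)$-face realizing this minimum.

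Base case $k=2$: each $F_m$ is a parallelogram; relabel its edges so $|u_1^{(m)}|\ge|u_2^{(m)}|$. From ${\diam}(F_m)\le|u_1^{(m)}|+|u_2^{(m)}|\le2|u_1^{(m)}|$ we get $|u_1^{(m)}|\to\infty$, so $w(F_m)\le h_2(F_m)={\vol}_2(F_m)/|u_1^{(m)}|\le V/|u_1^{(m)}|\to0$.

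Inductive step, $k\ge3$, assuming the lemma for $k-1$: it suffices to show that every subsequence of $\{F_m\}$ has a further subsequence along which $w\to0$. Given a subsequence, relabel the edges of each $F_m$ so that $u_1^{(m)}$ is a longest one; then $|u_1^{(m)}|\ge{\diam}(F_m)/k\to\infty$. Let $G_m$ be the $(k-1)$-face of $F_m$ obtained by deleting $u_2^{(m)}$; it is a $(k-1)$-face of $C_m$ containing $u_1^{(m)}$, so ${\diam}(G_m)\ge|u_1^{(m)}|\to\infty$. Passing to a further subsequence, either (i) ${\vol}_{k-1}(G_m)\to\infty$, or (ii) $\{{\vol}_{k-1}(G_m)\}$ is bounded. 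In case (i), $w(F_m)\le h_2(F_m)={\vol}_k(F_m)/{\vol}_{k-1}(G_m)\le V/{\vol}_{k-1}(G_m)\to0$. In case (ii), the inductive hypothesis applies to $\{G_m\}$ (a sequence of $(k-1)$-faces of $C_m$ of bounded $(k-1)$-volume and diameter tending to $\infty$) and yields $(k-2)$-faces $E_m'$ of $G_m$, say $E_m'=G_m$ with the edge $u_{\ell(m)}^{(m)}$ deleted ($\ell(m)\neq2$), such that ${\dist}\bigl(u_{\ell(m)}^{(m)},\ {\rm span}\{u_a^{(m)}:a\neq2,\ell(m)\}\bigr)\to0$. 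Since ${\rm span}\{u_a^{(m)}:a\neq2,\ell(m)\}\subset{\rm span}\{u_a^{(m)}:a\neq\ell(m)\}$, we get $h_{\ell(m)}(F_m)\le{\dist}\bigl(u_{\ell(m)}^{(m)},\ {\rm span}\{u_a^{(m)}:a\neq2,\ell(m)\}\bigr)\to0$, hence $w(F_m)\to0$. In either case $w\to0$ along the subsequence, completing the induction.

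The crux of the argument — and the reason the statement has to be generalized to carry the hypothesis that ${\vol}_k(F_m)$ stays bounded — is this dichotomy in the inductive step: a $k$-parallelepiped of large diameter need not possess a $(k-1)$-face of large $(k-1)$-volume (a long thin needle is a counterexample, in which case its $k$-volume is forced to be small), so one cannot simply take the largest $(k-1)$-face; one either exploits a genuinely large $(k-1)$-face when one is available, or else passes to a $(k-1)$-face of still large diameter but now controlled $(k-1)$-volume and recurses, and the volume bound is precisely what forces both branches to close. The remaining care is routine: the passage to subsequences that isolates a longest edge in each $F_m$ and separates cases (i) and (ii), and the bookkeeping of which affine subspace each width $h_j$ is measured against, so that a width computed inside $F_m$ is dominated by the corresponding width inside the smaller face $G_m$ to which the inductive hypothesis was applied.
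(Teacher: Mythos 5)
Your proof is correct and follows essentially the same route as the paper's: induction on $k$, the base-times-height identity ${\vol}_k(F)=h_j(F)\,{\vol}_{k-1}(F^{(j)})$, and the same dichotomy between a $(k-1)$-face of unbounded volume (conclude directly) and bounded volume (recurse into a face containing a longest edge). The only difference is cosmetic: where the paper lifts the thin pair of $(k-2)$-faces back to $F_m$ by an explicit translation argument, you use the cleaner monotonicity $h_{\ell}(F_m)\le h_{\ell}(G_m)$ coming from enlarging the span.
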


\begin{figure}[h]
\begin{center}
\includegraphics[scale=0.45]{keylem1gen.png}
\caption{} \label{keylem1gen.png}
\end{center}
\end{figure}

\begin{proof}
We show this lemma by induction on $k$. For $k=2$ since ${\diam}(F_m)\to\infty$ one can see by the triangle inequality that the length of the diagonal of $F_m$ goes to $\infty$, and hence, ${\vol}_1(E_m)\to \infty$ for some edge $E_m$ of $F_m$.  Since for the line $H_m$ containing $E_m$ and the other line $H_m'$ containing the edge which is parallel to $E_m$, we have 
\[
{\vol}_2(F_m)={\dist}(H_m,H_m'){\vol}_1(E_m)
\] and it is bounded. It implies that ${\dist}(H_m,H_m')\to 0$. 

Now we assume that the statement holds for any sequence of $k-1$-dimensional faces of $C_m$. Suppose that a sequence $\{F_m\}_m$ of $k$-dimensional faces of $C_m$ satisfies that ${\diam}(F_m)\to \infty$ and $\{{\vol}_k(F_m)\}_m$ is bounded. For any $k-1$-dimensional face $E_m$ of $F_m$ we have 
\begin{equation}\label{keylemma1gen1}
{\vol}_k(F_m)={\dist}(H_m, H_m'){\vol}_{k-1}(E_m). 
\end{equation}
If there exists a sequence $\{E_m\}_m$ with ${\vol}_{k-1}(E_m)\to \infty$, then we have ${\dist}(H_m, H_m')\to 0$, and hence the sequence $\{E_m\}_m$ is the required one. We assume that the $k-1$-dimensional volumes of any $k-1$-dimensional face of $F_m$ is bounded. The assumption ${\diam}(F_m)\to \infty$ and the triangle inequality imply that there exists a sequence of edges $\{e_m\}_m$ of $F_m$ with ${\vol}_1(e_m)\to \infty$. For any $k-1$-dimensional face $E_m$ containing $e_m$, the assumption of the induction implies that there exists sequences of $k-2$-dimensional affine subspaces $\{f_{1,m}\}_m$ and $\{f_{1,m}'\}_m$ containing $k-2$-dimensional face of $E_m$ such that 
\[
{\dist}(f_{1, m},f_{1, m}')\to 0 \quad (m\to \infty). 
\]
For the other $k-1$-dimensional face $E_m'$ of $F_m$ which is parallel to $E_m$, let $f_{2,m}$ and $f_{2,m}'$ be the affine subspaces which corresponds to $f_{1,m}$ and $f_{1,m}'$ by the translation which maps to $E_m$ to $E_m'$. Let $G_m$ be the $k-1$-dimensional face of $F_m$ containing $f_{1,m}$ and $f_{2,m}$.  Let $\hat H_m$ be the affine subspace containing $G_m$ and $\hat H_m'$ the other $k-1$-dimensional affine subspace containing the $k-1$-dimensional face of $F_m$ which is parallel to $G_m$. Then we have 
\[
{\dist}(\hat H_m, \hat H_m')={\dist}(f_{1,m}', f_{2,m}')={\rm dist}(f_{1,m}, f_{2,m})\to 0 \quad  (m\to \infty). 
\]It implies that this $G_m$ is the required one, and we complete the proof. 
\end{proof}

\begin{prop}\label{positiveness}
The function $\tilde d$ defined in Definition~\ref{deftilded} satisfies the positiveness. 
\end{prop}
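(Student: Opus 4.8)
The plan is to derive the positiveness of $\tilde d$ from the closedness of each $G_n$-orbit in $(\C_n, d)$, as announced in the footnote. Suppose $\tilde d(\alpha,\beta)=0$ and fix representatives $P_1\in\alpha$, $P_2\in\beta$; then $\inf_{g\in G_n}d(P_1,gP_2)=0$, so there is a sequence $\{g_m\}_m\subset G_n$ with $d(P_1,g_mP_2)\to 0$, i.e., $g_mP_2\to P_1$ in the metric space $(\C_n,d)$. It suffices to produce $g\in G_n$ with $g_mP_2\to gP_2$: since $d$ is a metric on $\C_n$, uniqueness of limits then forces $P_1=gP_2$, hence $\alpha=[P_1]=[P_2]=\beta$. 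Writing $g_m=(A_m,t_m)$ with $A_m\in{\rm GL}(n,\Z)$ and $t_m\in\R^n$, I would extract a subsequence along which $A_m$ is constant and $t_m$ converges; then $g_m\to g$ in $G_n$ for some $g$, and continuity of the action yields $g_mP_2\to gP_2$.

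First I would set up the geometry on the $P_2$-side. Replacing $P_2$ by a translate of itself (which changes neither its $G_n$-orbit nor the class $[P_2]$), we may assume $C:=[-2a,2a]^n\subset{\rm int}\,P_2$ for some $a>0$. Then $g_mC\subset g_mP_2$ and ${\vol}_n(g_mC)={\vol}_n(C)=(4a)^n$ since $\det A_m=\pm1$. As $P_1$ is compact, fix $R>0$ with $P_1$ contained in the open ball $B_R$ of radius $R$ centered at the origin. From the inclusion $g_mC\setminus B_R\subset g_mP_2\setminus P_1$ we get ${\vol}_n(g_mC\setminus B_R)\le d(g_mP_2,P_1)\to 0$, hence
\[
{\vol}_n(g_mC\cap B_R)\longrightarrow (4a)^n>0 .
\]

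The main step — and the place where Lemma~\ref{keylemma2} and Lemma~\ref{keylemma1} are used — is to show that $\{A_m\}_m$ is bounded. If not, then after passing to a subsequence $\|A_m\|\to\infty$, and since $g_mC=A_mC+t_m$ is a parallelepiped whose diameter is comparable to $\|A_m\|$ this gives ${\diam}(g_mC)\to\infty$. By Lemma~\ref{keylemma1} applied to the cube $C$ and the sequence $\{g_m\}_m$, there are $(n-1)$-dimensional faces $F_m$ of $g_mC$ with ${\dist}(H_m,H_m')\to 0$, where $H_m$ is the hyperplane through $F_m$ and $H_m'$ the parallel hyperplane through the opposite face; being convex, $g_mC$ lies in the slab bounded by $H_m$ and $H_m'$, whose width $w_m$ tends to $0$ (here Lemma~\ref{keylemma2} supplies the effective bound on $w_m$ in terms of the direction in which $A_m$ contracts). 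But a slab of width $w_m$ meets the fixed ball $B_R$ in a set of volume at most $c_{n,R}\,w_m\to 0$, contradicting the display above. Hence $\{A_m\}_m$ is a bounded subset of the discrete set of integer matrices, so it is finite, and we may pass to a subsequence with $A_m\equiv A$.

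Finally I would bound $\{t_m\}_m$. Now $g_mP_2=AP_2+t_m$ is a fixed body translated by $t_m$, and it tends to $P_1$. If $|t_m|\to\infty$ along a subsequence, then $AP_2+t_m$ eventually leaves the bounded set $P_1$, so ${\vol}_n\big((AP_2+t_m)\cap P_1\big)\to 0$; but ${\vol}_n\big((AP_2+t_m)\cap P_1\big)\ge{\vol}_n(P_1)-d(AP_2+t_m,P_1)\to{\vol}_n(P_1)>0$, a contradiction. So $\{t_m\}_m$ is bounded; passing to a subsequence $t_m\to t$, and we put $g:=(A,t)\in G_n$. Since translation is continuous in $L^1(\R^n)$, $d(g_mP_2,gP_2)={\vol}_n\big((AP_2+t_m)\bigtriangleup(AP_2+t)\big)\to 0$, so $g_mP_2\to gP_2$; together with $g_mP_2\to P_1$ this gives $P_1=gP_2$, hence $\alpha=\beta$. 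The one genuinely delicate point is the boundedness of $\{A_m\}_m$ — the geometric fact packaged in Lemmas~\ref{keylemma1}, \ref{keylemma1gen} and \ref{keylemma2}, that a volume-preserving integral linear map of large norm stretches the fixed cube into a long thin parallelepiped which cannot retain a positive fraction of its volume inside a fixed ball; the remaining steps are routine measure theory and the discreteness of ${\rm GL}(n,\Z)$.
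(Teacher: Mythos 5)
Your proof is correct, and its skeleton is the same as the paper's: reduce positiveness to showing that a minimizing sequence $\{g_m=(A_m,t_m)\}_m$ is bounded, treat the linear parts and the translation parts separately, and use the discreteness of ${\rm GL}(n,\Z)$ plus $L^1$-continuity of translation to pass to the limit. The interesting difference is in how you rule out $|A_m|\to\infty$. The paper places the small cube $C$ inside $P_1$, circumscribes $P_2$ by a large cube $C'$, applies Lemma~\ref{keylemma1} to $C_m=g_mC'$ to get a thin slab containing $g_mP_2$, and then invokes Lemma~\ref{keylemma2} to find a vertex of the \emph{fixed} cube $C$ at distance $>a$ from the slab, so that a definite ball inside $P_1$ is missed by $g_mP_2$. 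You instead place the cube inside $P_2$, push it forward, and observe that a volume-$(4a)^n$ parallelepiped squeezed into a slab of width $w_m\to 0$ can retain at most $c_{n,R}\,w_m\to 0$ of its volume inside the fixed ball $B_R\supset P_1$, contradicting ${\vol}_n(g_mC\cap B_R)\to(4a)^n$. This makes Lemma~\ref{keylemma2} unnecessary: your parenthetical crediting it with "the effective bound on $w_m$" is a misattribution (that bound comes from Lemma~\ref{keylemma1} alone, and Lemma~\ref{keylemma2} plays no role in your argument), but this is cosmetic and not a gap. Your slab-versus-ball estimate is arguably the more economical route; the paper's vertex argument has the mild advantage of producing an explicit uniform lower bound $2^{-n}{\vol}(B_{a/4}(0))$ on $d(P_1,g_mP_2)$ rather than only a contradiction in the limit. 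Your handling of the translation part and of the final convergence step matches the paper's in substance.
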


\begin{proof}
Suppose that $\tilde d(\alpha,\beta)=0$ for $\alpha, \beta\in \tilde\C_n$. Take  representatives $P_1, P_2\in\C_n$ of $\alpha, \beta$ and a minimizing sequence $\{g_m\}_m\subset G_n$, i.e., 
\[
d(P_1,g_mP_2)\to \tilde d(\alpha,\beta)=0 \quad (m\to \infty). 
\]We show that $\{g_m\}_m$ is a bounded sequence in $G_n$ with respect to the direct product metric of the Euclidean metrics on $G_n={\rm GL}(n,\Z)\times \R^n\subset \R^{n^2}\times \R^n$. If $\{g_m\}_m$ is bounded, then by taking a limit $g\in G_n$ of a convergent subsequence of $\{g_m\}_m$  we have $d(P_1, gP_2)=0$, and hence, $\alpha=[P_1]=[gP_2]=\beta$. 

We may take representatives $P_1$ and $P_2$ of $\alpha$ and $\beta$ so that $P_1\cap P_2$ contains the origin of $\R^n$ as an interior point. For $a>0$ small enough we take an $n$-dimensional cube $C:=[-2a,2a]^n\subset P_1\subset \R^n$. We also take an $n$-dimensional large cube $C'$ so that $P_2\subset C'$. See Figure~\ref{posi1}. Consider a sequence of polytopes  $\{C_m:=g_mC'\}_m$.

\begin{figure}
\begin{center}
\includegraphics[scale=0.55]{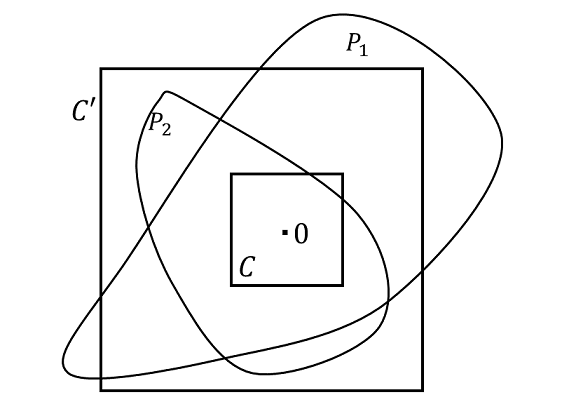}
\caption{} \label{posi1}
\end{center}
\end{figure}

Hereafter for any unbounded sequence we may consider it goes to $\infty$ by taking a subsequence. Suppose that $\{g_m=(A_m,t_m)\}_m$ is unbounded. Then the norm $|g_m|=|A_m|+|t_m|$ goes to $\infty$. 

\noindent
(i) Suppose that $|A_m|$ goes to $\infty$. We may assume that the $(1,1)$-entry of $A_m$ goes to $\infty$ and put $a_m$ the $(1,1)$-entry of $A_m$. Then for some $r>0$ small enough we have 
\[
{\diam}(C_m)\geq r|a_m|\to \infty \quad (m\to \infty). 
\]Since ${\diam}(C_m)\to \infty$ and ${\vol}(C_m)={\vol}(C')$ is bounded, by Lemma~\ref{keylemma1}, there exists an $n-1$dimensional face $F_m$ of $C_m$ such that 
\[
{\dist}(H_m,H_m')\to 0 \ (m\to \infty), 
\] where $H_m$ is the $n-1$-dimensional affine subspace containing $F_m$ and $H_m'$ is the $n-1$-dimensional affine subspace containing the $n-1$-dimensional face of $C_m$ which is parallel to $F_m$. In particular we have 
\begin{equation}\label{<a/2}
{\dist}(H_m, H_m')<\frac{a}{2} 
\end{equation} for $m\gg 1$. On the other hand by Lemma~\ref{keylemma2}, there exists a vertex $v_{m}$ of $C$ for each $H_m$ such that 
\begin{equation}\label{>a}
{\dist}(v_m, H_m)>a. 
\end{equation}
By (\ref{<a/2}) and (\ref{>a}) we have 
\[
{\dist}(v_m, C_m)\geq {\dist}(v_m,H_m)-\frac{a}{2}>\frac{a}{2}, 
\]and hence, 
\begin{equation}\label{empty}
B_{a/4}(v_m)\cap C \cap C_m=\emptyset, 
\end{equation}
where $B_{a/4}(v_m)$ is the open ball in $\R^n$ of radius $a/4$ centered at $v_m$.  See Figure~\ref{posi2}. 
Since $P_1\bigtriangleup g_mP_2\supset P_1\setminus g_mP_2\supset C\setminus C_m\supset B_{a/4}(v_m)\cap C$,  we have 
\[
d(P_1, g_mP_2)\geq {\vol}_n(C\setminus C_m)\geq{\vol}_n(B_{a/4}(v_m)\cap C)=\frac{1}{2^n}{\vol}(B_{a/4}(0))>0 
\] by (\ref{empty}). This contradicts to $d(P_1, g_mP_2)\to 0$. So we have that  $\{A_m\}_m$ is a bounded sequence\footnote{Since ${\rm GL}(n,\Z)$ is a discrete space without accumulation points, a convergent subsequence of $\{A_m\}_m$ in ${\rm GL}(n,\Z)$ is a constant sequence for $m\gg1$.}.

\begin{figure}
\begin{center}
\includegraphics[scale=0.55]{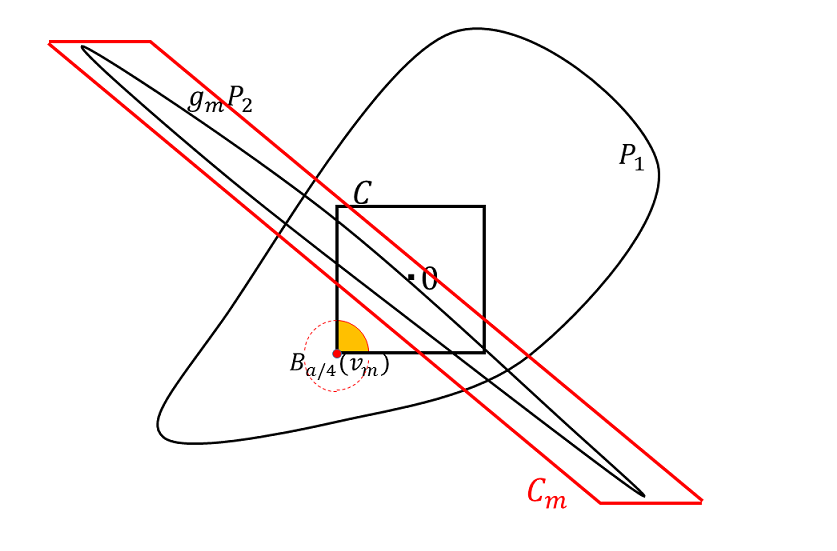}
\caption{} \label{posi2}
\end{center}
\end{figure}

\noindent
(ii) 
Suppose that $\{t_m\}_m$ is unbounded and $\{A_m\}_m$ is bounded. In this case since $r_m:={\diam}(C_m)$ is a bounded sequence, by fixing a point $p_m\in C_m$ we have 
\[
C_m\subset B_{r_m}(p_m)\subset B_R(p_m)
\]for some $R\gg 0$ and any $m$. Since $\{t_m\}_m$ is unbounded and $\{A_m\}_m$ is bounded, we have ${\dist}(p_m,C)\to \infty$. In particular we have ${\dist}(p_m,C)>2R$, $C\cap B_R(p_m)=\emptyset$, and hence, 
\[
C\cap C_m=\emptyset
\]for $m\gg 1$. It implies that $C\subset P_1\bigtriangleup g_m P_2$ and 
\[
d(P_1,g_mP_2)>{\vol}_n(C)>0. 
\]
This contradicts to $d(P_1, g_mP_2)\to 0$. So we also have $\{t_m\}_m$ is a bounded sequence. 

By (i) and (ii) we have that $\{g_m\}_m$ is a bounded sequence and complete the proof. 
\end{proof}

\section{Further problems}\label{Further problems}

In this section we focus on the moduli space of the Delzant polytopes $\tilde\D_n=\D_n/G_n$. As we noted in Remark~\ref{Delzant}, this moduli space can be identified with the set of all equivalence classes of symplectic toric manifolds with respect to the weak isomorphisms \cite{KarshonKessler}.

\subsection{Completion}
As it is shown in \cite{PPRS}, the metric space $(\D_n, d)$ is not complete. Namely there exists a Cauchy sequence\footnote{In \cite{PPRS} they constructed such a sequence in $\D_2$. By taking the product with cubes $[0,1]^{n-2}$ the sequence gives a sequence with the same property in $\D_n$. }  $\{T_m\}_m$ in $\D_n$ which does not converge in $\D_n$.  In \cite{PPRS} they determined the completion of $(\D_2, d)$. 
\begin{theorem}[\cite{PPRS}, Theorem~{8}]
The completion $\overline{(\D_2, d)}$ of $(\D_2, d)$ is isometric to 
\[
\left({\mathcal C}'_2/{{\sim}}\right)\cup \{\emptyset\}, 
\]where ${\mathcal C}'_2$ is the set of all compact convex subsets in $\R^2$ with positive Lebesgue measure and $A\sim B$ if ${\vol}_2(A\bigtriangleup B)=0$ for $A, B\in{\mathcal C}'_2$. 
\end{theorem}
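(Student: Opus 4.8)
The plan is to work inside $L^1(\R^2)$ through the map $A\mapsto\chi_A$, which is an isometric embedding turning the symmetric-difference metric $d$ into the $L^1$-distance. Since $L^1(\R^2)$ is complete, the completion of $(\D_2,d)$ is the closure of $\{\chi_P\mid P\in\D_2\}$ in $L^1(\R^2)$, so the task is to identify this closure with $\{\chi_K\mid K\text{ compact convex in }\R^2,\ {\vol}_2(K)>0\}\cup\{0\}$. The latter is, as a metric space, isometric to $({\mathcal C}'_2/{\sim})\cup\{\emptyset\}$: a compact convex set of positive measure equals the closure of its interior, which forces $\sim$ to be trivial on ${\mathcal C}'_2$, and the function $0$ corresponds to $\emptyset$, with $d(\emptyset,[K])={\vol}_2(K)$. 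Two inclusions then have to be proved: every such $\chi_K$ lies in the closure (density of $\D_2$), and the closure contains nothing else (closedness).

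\emph{Density.} The function $0$ is the limit of the Delzant triangles ${\rm conv}\{0,\tfrac{1}{m}e_1,\tfrac{1}{m}e_2\}$. Given a compact convex $K$ with ${\vol}_2(K)>0$ and $\varepsilon>0$, I would first produce a convex polygon $Q\subset K$ with vertices in $\mathbb{Q}^2$ and ${\vol}_2(K\setminus Q)<\varepsilon/2$: take the convex hull of sufficiently many points of $\partial K$, contract it slightly toward an interior point so that it lies in ${\rm int}\,K$, and move its vertices to nearby rational points; convexity of $K$ makes the lost area as small as desired. The polygon $Q$ is rational but possibly non-smooth at some vertices; resolve each such vertex by finitely many corner chops --- the Hirzebruch--Jung (continued-fraction) desingularization of the two-dimensional rational cone at that vertex --- with the chopped-off triangles chosen small enough that the total area removed is $<\varepsilon/2$. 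The result is a Delzant polygon $P\subset K$ with $d(K,P)={\vol}_2(K\setminus P)<\varepsilon$.

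\emph{Closedness.} Let $\chi_{P_m}\to f$ in $L^1$ with $P_m\in\D_2$. Passing to an almost-everywhere convergent subsequence gives $f=\chi_S$ a.e.\ for a measurable set $S$ with ${\vol}_2(S)<\infty$. One then checks that $\chi_S$ equals $\chi_K$ a.e.\ for some closed convex $K$: via Lebesgue density points (or a Hausdorff-limit argument for convex bodies), the density points of $S$ form a convex set, because for large $m$ the convex set $P_m$ fills up most of a small ball about each density point of $S$ and hence, by convexity, a neighbourhood of every point on the segment between two of them. Finally, a closed convex subset of $\R^2$ of finite measure is either bounded or has measure zero, since an unbounded closed convex set with non-empty interior contains an infinite wedge and therefore infinite area. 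Hence $f$ is the indicator of a compact convex set of positive measure, or $f=0$, and the closure is exactly as claimed; the completion of $(\D_2,d)$ is therefore isometric to $({\mathcal C}'_2/{\sim})\cup\{\emptyset\}$.

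The main obstacle is twofold: in the density step one needs the two-dimensional toric resolution of singularities --- the fact that every rational convex polygon becomes a Delzant polygon after finitely many suitably chosen corner chops --- and in the closedness step one needs the non-soft verification, via a density-point or Hausdorff-limit argument, that an $L^1$-limit of indicators of convex polygons is again the indicator of a convex set, up to the single degeneration to $\emptyset$. Once these are granted, the remaining points (the isometric embedding into $L^1$, the triviality of $\sim$ on ${\mathcal C}'_2$, the fact that finite measure forces boundedness) are routine.
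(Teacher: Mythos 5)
This statement is quoted in the paper from \cite{PPRS} (their Theorem~8) and no proof of it is given in the present paper, so there is no internal argument to compare against; I can only assess your sketch on its own terms. As a sketch it is sound and follows what is essentially the natural (and, as far as the strategy goes, the standard) route: realize the completion as the $L^1$-closure of the indicator functions, prove density of Delzant polygons in the compact convex bodies by inner rational approximation followed by two-dimensional toric corner-chopping, and prove closedness by showing an $L^1$-limit of indicators of convex sets is again the indicator of a convex set of finite measure, which is then compact or null. You also correctly isolate the two points that carry the real content (the Hirzebruch--Jung resolution with controllably small chops, and the convexity of the limit), and correctly observe that $\sim$ is trivial on ${\mathcal C}'_2$ because a compact convex set of positive planar measure is the closure of its interior.

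One phrase deserves a warning: in the closedness step, from ``$P_m$ fills up most of a small ball about each density point'' you cannot conclude that $P_m$ contains \emph{a neighbourhood} of every point of the segment --- convexity only gives you the segment itself (via the fact that a convex set missing the centre of a ball meets at most half its measure). What you actually need, and what does work, is the Minkowski-combination refinement: if $U\subset B_r(x)\cap P_m$ and $V\subset B_r(y)\cap P_m$ each have measure at least $(1-\delta)\,{\vol}_2(B_r)$, then $\lambda U+(1-\lambda)V\subset P_m\cap B_r(\lambda x+(1-\lambda)y)$ has measure at least $(1-\delta)\,{\vol}_2(B_r)$ by Brunn--Minkowski, so every point of the segment is again a density point of the limit set. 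With that repair (and the routine check that a closed convex subset of $\R^2$ with finite positive measure is compact), your outline assembles into a complete proof.
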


The Cauchy sequence $\{T_m\}_m$ in $\D_n$ gives a Cauchy sequence in $\tilde\D_n$ which is not a convergent sequence\footnote{In fact it converges in $\tilde\C_n$.} in $\tilde\D_n$. In particular $(\tilde\D_n, \tilde d)$ is not complete. One may consider the following problem. 

\medskip

\begin{prob}
Determine the completion of $(\tilde\D_n, \tilde d)$. For instance whether the completion of ${\tilde\D_2}$ is equal to $\overline{\D_2}/G_2$ or not?
\end{prob}

\subsection{Dimension of the moduli space}
Each metric space has an important invariant, the {\it Hausdorff dimension}. Then one may ask the following.

\begin{prob} Estimate the Hausdorff dimension of $(\D_n, d)$ or $(\tilde \D_n, \tilde d)$.  
\end{prob}

\medskip

This may be a naive problem because even if we fix $n$, the dimension of polytopes, the number of vertices increase any number, and hence, the dimension may become infinity. One candidate to have a reasonable answer is to introduce a stratification of $\D_n$ or $\tilde\D_n$ by the numbers of vertices. 

\begin{defn}
For each natural number $l$ define $\D_n^{(l)}$ and $\tilde\D_n^{(l)}$ by 
\[
\D_n^{(l)}:=\{P\in \D_n \ | \ P \ {\rm has} \ l \ {\rm vertices.}\}
\]
and 
\[
\tilde\D_n^{(l)}:=\D_n^{(l)}/G_n. 
\]
\end{defn}

Note that $d$ and $\tilde d$ induce metrics on $\D_n^{(l)}$ and $\tilde\D_n^{(l)}$ respectively.

\begin{prob} Estimate the Hausdorff dimension of $(\D_n^{(l)}, d)$ or $(\tilde \D_n^{(l)}, \tilde d)$. 
\end{prob}

\subsection{Relation with Gromov-Hausdorff distance}\label{Relation with Gromov-Hausdorff distance}
For each Delzant polytope $P\in \D_n$, one can canonically associate a compact symplectic toric manifold $M_P$ by the Delzant construction procedure. In fact the symplectic manifold $M_P$ carries canonical K\"ahler structure, in particular it is equipped with a Riemannian metric. By the explicit description of the K\"ahler metric in terms of polytope due to Guiellemin \cite{GuilleminKahler}, as it is noted in \cite{Abreu}, one can see that if two Delzant polytopes $P$ and $P'$ are $G_n$-congruent each other, then two compact Riemannian manifolds $M_P$ and $M_{P'}$ are isometric. In this way we have a natural map between two metric spaces 
\[
{\rm Del} : \tilde\D_n\to {\mathcal M}, \quad [P] \mapsto {\rm Del}([P]) := M_P,  
\]where ${\mathcal M}$ is the metric space consisting of all (isometry classes of) compact Riemannian manifolds equipped with  the {\it  Gromov-Hausdorff distance} $d_{GH}$.  See \cite[Chapter~7]{BBI} for its definition. Then one may consider the following (vague) problem. 

\begin{prob}
Study the map Del from the both viewpoints of symplectic geometry and metric geometry. 
\end{prob}

\begin{remark}
It would be also interesting to study the map ${\rm Del}$ by considering ${\mathcal M}$ as a metric space by the {\it intrinsic flat metric} $d_F$, which is introduced in \cite{SormaniWenger}. 
\end{remark}

Unfortunately the moduli space $\tilde\D_n$ would not be suitable for this problem as seen in the following example. 

\begin{example}\label{excollapse}
Let $\{P_m\}_m$ be a sequence in $\D_2$ defined by rectangles $P_m:=[0,1]\times[0,1/m]$. It is known that the corresponding Riemannian manifolds by Del is a sequence $\{(S^2\times S^2, g_{FS}\oplus \frac{1}{m}g_{FS})\}_m$ in ${\mathcal M}$, where $g_{FS}$ is the Fubini-Study metric on ${\Cc}P^1=S^2$. Its limit with respect to the Gromov-Hausdorff distance is $(S^2, g_{FS})$, which is a 2-dimensional symplectic toric manifold constructed from the closed interval $[0,1]\in \D_1$ by the Delzant construction. On the other hand the limit of $\{P_m\}_m$ with respect to the metric $d$ (in the completion $\overline{\D_2}$) is the empty set $\emptyset$. See Figure~\ref{s2ex}. 
\end{example}

\begin{figure}
\begin{center}
\includegraphics[scale=0.55]{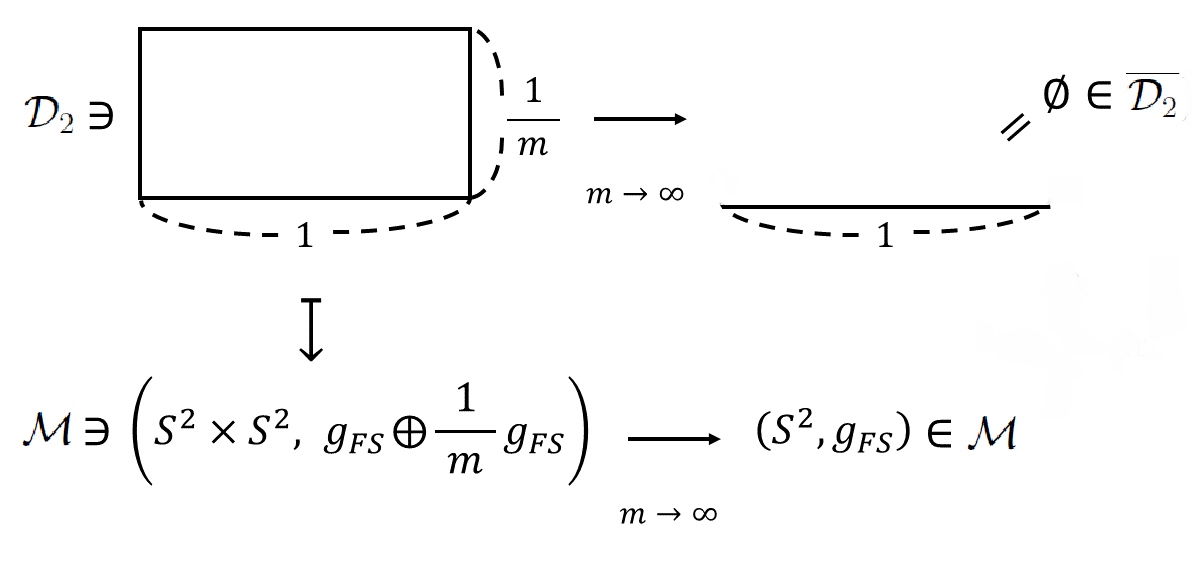}
\caption{} \label{s2ex}
\end{center}
\end{figure}

To overcome this problem we first extend the moduli space in the following way.  
\begin{defn}
For each non-negative integer $k$ with $0\leq k\leq n$ let $\iota_k:\R^k\to \R^n$ be the inclusion, $\R^k\ni(x_1,x_2,\ldots, x_k)\mapsto (x_1, x_2, \ldots, x_k, 0,\ldots, 0)\in\R^n$. We define the set $\D_{\leq n}$ by
\[
\D_{\leq n}:=\{g(\iota_k(P)) \ | \ P\in\D_k, \ g\in G_n\}, 
\]and $\tilde\D_{\leq n}$ by 
\[
\tilde\D_{\leq n}:=\D_{\leq n}/G_n. 
\]
\end{defn}

Now we extend the map ${\rm Del}$ to a map from $\tilde\D_{\leq n}$. For given equivalence class $[g(\iota_k(P))]\in\tilde\D_{\leq n}$ let $M_P$ be a $2k$-dimensional Riemannian manifold constructed from $P\in\D_k$ by the Delzant construction. 

\begin{lemma}\label{extension}
For each $[g(\iota_k(P))]\in\tilde\D_{\leq n}$ the equivalence class of $M_P$ in ${\mathcal M}$ does not depend on a choice of the representative $P\in\D_k$. 
\end{lemma}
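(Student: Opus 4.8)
The plan is to reduce the independence statement to the classical fact that the Delzant construction is an equivariant invariant up to $G_k$-congruence (combined with Abreu's/Guillemin's observation, already cited in the paper, that $G_k$-congruent Delzant polytopes produce isometric Kähler toric manifolds). So the real content is purely combinatorial/linear-algebraic: if $P\in\D_k$ and $Q\in\D_l$ are such that $\iota_k(P)$ and $\iota_l(Q)$ lie in the same $G_n$-orbit, then $k=l$ and $P$ is $G_k$-congruent to $Q$; once this is in hand, $M_P$ and $M_Q$ are isometric and the equivalence class in $\mathcal M$ is well-defined.

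The key steps, in order, would be: (1) Observe that $\iota_k(P)$ has affine dimension $k$ (it has nonempty interior in $\R^k$ since $P$ is a body), so if $g(\iota_k(P))=\iota_l(Q)$ for some $g=(A,t)\in G_n$, then comparing affine dimensions forces $k=l$. (2) The affine hull of $\iota_k(P)$ is the coordinate subspace $\R^k\times\{0\}$, and $g$ maps it onto the affine hull of $\iota_k(Q)$, which is again $\R^k\times\{0\}$; hence $A$ preserves the rational subspace $V:=\R^k\times\{0\}$, and $t$ can be adjusted (it only differs from an element of $V$ by something acting trivially after the identification) so that $g$ restricts to an affine map of $V\cong\R^k$. (3) Show that the restriction $A|_{\Z^k\times\{0\}}$ lands in $\Z^k\times\{0\}$ and is invertible over $\Z$: since $A\in\mathrm{GL}(n,\Z)$ preserves the primitive sublattice $\Z^k\times\{0\}=\Z^n\cap V$ and is a lattice automorphism of $\Z^n$, it restricts to a lattice automorphism of $\Z^k\times\{0\}$ (its inverse $A^{-1}$ likewise preserves this sublattice). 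Therefore the restricted map $g|_V$, transported through $\iota_k$, is an element $\bar g\in G_k=\mathrm{AGL}(k,\Z)$ with $\bar g(P)=Q$. (4) Conclude by the cited result of Guillemin/Abreu that $M_P$ and $M_Q$ are isometric, so $[M_P]=[M_Q]$ in $\mathcal M$.

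The main obstacle I expect is step (3): one has to be careful that the restriction of an integral affine transformation to an invariant coordinate subspace is again integral and unimodular — this uses that $\Z^k\times\{0\}$ is a primitive (saturated) sublattice of $\Z^n$ and that both $A$ and $A^{-1}$ preserve it, so $A|_{\Z^k}$ has an integral inverse, namely $A^{-1}|_{\Z^k}$; the translation part requires noting that two preimages differ by an element of $\Z^n$, whose component in the complementary coordinates is irrelevant after applying $\iota_k^{-1}$. Everything else is bookkeeping with affine hulls and the invocation of an already-cited theorem, so no lengthy computation is needed.
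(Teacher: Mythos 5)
Your proposal is correct and follows essentially the same route as the paper: show that any $g=(A,t)\in G_n$ carrying $\iota_k(P)$ to $\iota_k(Q)$ must preserve the coordinate subspace $\R^k\times\{0\}$ (the paper phrases this as a block-triangular decomposition of $A$ with $A_{11}\in\mathrm{GL}(k,\Z)$ and $t=(t',0)$, you phrase it as $A$ restricting to an automorphism of the primitive sublattice $\Z^k\times\{0\}$), hence restricts to an element of $G_k$ sending $P$ to $Q$, and then invoke the Guillemin--Abreu isometry statement. The only cosmetic difference is that no ``adjustment'' of $t$ is needed: since $g$ maps the affine hull $\R^k\times\{0\}$ onto itself, $t=g(0)$ already lies in $\R^k\times\{0\}$.
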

\begin{proof}
We show that if $\iota_k(P_1)=g(\iota_k(P_2))\in \D_{\leq n}$ for $g=(A, t)\in G_n$ and $P_1, P_2\in \D_k$, then $P_1=g'P_2$ for some $g'\in G_k$. 
Since $\iota_k(P_2)$ contains interior points in $\R^k(\subset \R^n)$ one has that $g(\R^k)=\R^k$, and hence, we have a block decomposition of $A$ and $t$ such as 
$$
A=\begin{pmatrix}
A_{11} & A_{12} \\ 
0 & A_{22}
\end{pmatrix}\in{\rm GL}(n,\Z), \quad 
t=\begin{pmatrix}
t' \\ 0 
\end{pmatrix}\in\R^k\oplus\R^{n-k}=\R^n
$$for some $A_{11}\in {\rm GL}(k,\Z), A_{22}\in{\rm GL}(n-k,\Z)$ and $t'\in \R^k$. It implies that $P_1=g'P_2$ for $g':=(A_{11}, t')\in{\rm AGL}(k,\Z)$. 
\end{proof}

Due to Lemma~\ref{extension} the following holds.

\begin{prop}
The map ${\rm Del} : \tilde\D_n\to {\mathcal M}$ can be extended to a map ${\rm Del}:\tilde\D_{\leq n}\to {\mathcal M}$, $[g(\iota_k(P))]\mapsto M_P$.  
\end{prop}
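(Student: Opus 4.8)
The plan is to show that the assignment $[g(\iota_k(P))] \mapsto M_P$ is well-defined, which by Lemma~\ref{extension} reduces to checking that this new map genuinely extends the old map ${\rm Del}:\tilde\D_n\to\mathcal M$ and that it is defined on all of $\tilde\D_{\leq n}$. Concretely, I would argue as follows. First, observe that every element of $\tilde\D_{\leq n}$ has, by definition, a representative of the form $\iota_k(P)$ with $P\in\D_k$ (simply apply $g^{-1}$ to the representative $g(\iota_k(P))$). So the candidate map is at least everywhere defined once we know the value $M_P$ is unambiguous. That unambiguity is exactly the content of Lemma~\ref{extension}: if $\iota_k(P_1)$ and $\iota_{k'}(P_2)$ represent the same class, then first $k=k'$ (the dimension of the affine span is a $G_n$-invariant of a body, since elements of $G_n$ are affine isomorphisms of $\R^n$), and then Lemma~\ref{extension} produces $g'\in G_k$ with $P_1=g'P_2$, whence $M_{P_1}$ and $M_{P_2}$ are isometric by the $G_k$-congruence invariance of the Delzant-Guillemin K\"ahler metric recalled in Section~\ref{Relation with Gromov-Hausdorff distance} (following \cite{Abreu}, \cite{GuilleminKahler}). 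Thus $[g(\iota_k(P))]\mapsto M_P$ is a well-defined map $\tilde\D_{\leq n}\to\mathcal M$.

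It remains to check the word ``extended'': that the restriction of this map to the subset $\tilde\D_n\subset\tilde\D_{\leq n}$ agrees with the original ${\rm Del}$. For this note that $\D_n\subset\D_{\leq n}$ via $k=n$ and $\iota_n={\rm id}$, and that the inclusion $\D_n\hookrightarrow\D_{\leq n}$ is $G_n$-equivariant, so it descends to an injection $\tilde\D_n\hookrightarrow\tilde\D_{\leq n}$. On a class $[P]$ with $P\in\D_n$, the new map returns $M_P$ with $M_P$ built from $P$ by the ordinary Delzant construction — which is precisely the definition of ${\rm Del}([P])$. Hence the two maps coincide on $\tilde\D_n$, and the proof is complete.

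The only genuine point requiring care — and the step I would present most carefully — is the claim that $k$ is determined by the class, i.e. that $\iota_k(P_1)$ $G_n$-congruent to $\iota_{k'}(P_2)$ forces $k=k'$. This follows because for $P\in\D_k$ the body $\iota_k(P)$ has affine hull of dimension exactly $k$, an element of $G_n$ maps affine subspaces to affine subspaces of the same dimension, and the affine hull of a body is intrinsic to the body; so $k=\dim(\text{aff.\ hull})=k'$. Everything else is bookkeeping: the block-triangular decomposition of $A$ and $t$ in Lemma~\ref{extension} is exactly what converts a $G_n$-congruence between lower-dimensional polytopes into a $G_k$-congruence, and the isometry invariance of $M_P$ under $G_k$ is quoted from the literature. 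I therefore expect no serious obstacle; the proposition is essentially a formal consequence of Lemma~\ref{extension} together with the dimension-invariance remark.
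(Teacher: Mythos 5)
Your proposal is correct and follows the same route as the paper, which simply deduces the proposition from Lemma~\ref{extension}; the well-definedness for a fixed $k$ is exactly that lemma, and the rest is bookkeeping. Your explicit check that $k$ itself is determined by the class (via the $G_n$-invariance of the dimension of the affine hull) is a point the paper leaves implicit, and it is a worthwhile addition since Lemma~\ref{extension} is only stated for representatives with the same $k$.
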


Since the metric $d$ on $\D_n$ cannot be extended to $\D_{\leq n}$ one may hope  to construct an alternative metric on $\D_{\leq n}$ and $\tilde\D_{\leq n}$. One can consider the {\it Hausdorff metric $d_H$} on $\D_{\leq n}$ as a candidate, however, the function $\tilde d_H: \tilde\D_{\leq n}\times \tilde\D_{\leq n}\to \R$ defined by 
\[
\tilde d_H(\alpha, \beta):=\inf\{d_H(P_1, P_2) \ | \ [P_1]=\alpha, [P_2]=\beta\}
\]for $\alpha, \beta\in \tilde\D_{\leq n}$ may degenerate. See the following example\footnote{This example is suggested by Yu Kitabeppu.}. 

\begin{example}
For $A:=\begin{pmatrix}1 & 2 \\ 2 & 5 \end{pmatrix}\in G_2$ let $P_1$ be a rectangle whose directional vectors are given by eigenvectors $u_1=\begin{pmatrix} 1 \\ 1+\sqrt{2} \end{pmatrix}$ and $u_2=\begin{pmatrix} 1 \\ 1-\sqrt{2} \end{pmatrix}$ of $A$ corresponding to eigenvalues $3+\sqrt{2}$ and $3-\sqrt{2}$. On the other hand let $P_2$ be a pentagon which is obtained by adding one vertex to $P_1$. See Figure~\ref{p1p2}.  
\begin{figure}
\begin{center}
\includegraphics[scale=0.65]{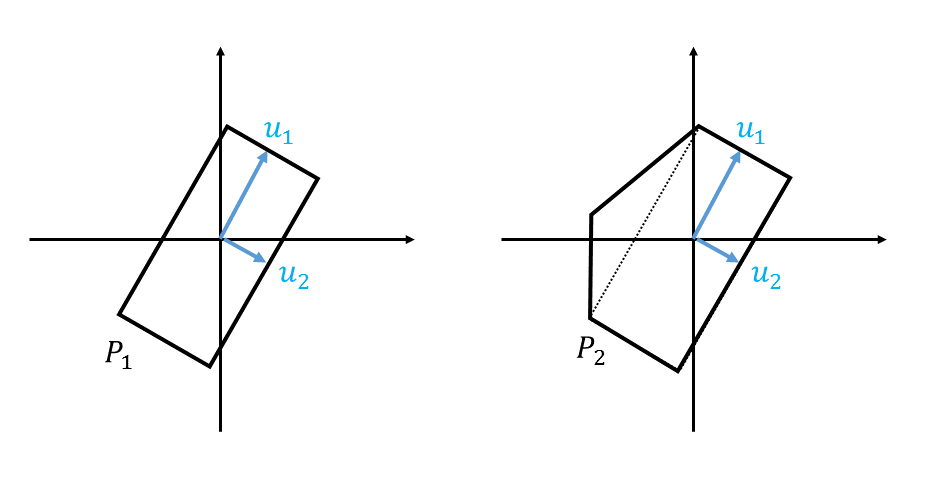}
\caption{A rectangle $P_1$ and a pentagon $P_2$} \label{p1p2}
\end{center}
\end{figure}
For this $A$, $P_1$ and $P_2$, we have that $P_1$ is not $G_n$-congruence to $P_2$ and 
\[
d_H(A^m(P_1), A^m(P_2))\to 0 \quad (m\to \infty). 
\] If we take a rectangle $P_1'$ (resp. pentagon $P_2'$) $\in \D_2$ which is close enough to $P_1$ (resp. $P_2$), then we have 
\[
\tilde d_H([P_1'], [P_2'])=0
\]  and $[P_1']\neq [P_2']$ in $\D_{\leq 2}$. 
\end{example}

In the subsequent paper \cite{FK}, to consider these problems,  we would like to use the {\it Wasserstein distance} between probability measures defined by convex bodies. 

\vspace{0.5cm}

\noindent{\bf Acknowledgements.}
The authors would like to thank Takashi Sato for giving a key idea of our proof of Proposition~\ref{positiveness} and Yu Kitabeppu for discussing about problems in Section~\ref{Relation with Gromov-Hausdorff distance}. They are grateful to Daisuke Kazukawa for telling us general properties on metric on the quotient space, Shouhei Honda for telling us recent developments in intrinsic flat metric. The authors also thank Yasufumi Nitta and Takahiko Yoshida for discussing on metrics on symplectic toric manifolds. 

\bibliography{reference}
\end{document}